\newtheorem{theorem}{Theorem}
\newtheorem{lemma}[theorem]{Lemma}
\newtheorem{prop}[theorem]{Proposition}
\newtheorem*{definition}{Definition}
\newtheorem{example}{Example}
\newtheorem*{remark}{Remark}
\newcommand{\C}{\mathcal{C}}
\newcommand{\F}{\mathcal{F}}
\newcommand{\K}{\mathcal{K}}
\begin{document}

\title{Quasirandom Graphs and the Pantograph Equation}
\author{Asaf Shapira \thanks{
		School of Mathematics, Tel Aviv University, Tel Aviv 69978, Israel.
		Email: asafico$@$tau.ac.il 
	} \and
	Mykhaylo Tyomkyn
	\thanks{Department of Applied Mathematics, Charles University, Czech Republic. Email: tyomkyn$@$kam.mff.cuni.cz
	}}

\date{}
\renewcommand{\baselinestretch}{1.05}

\maketitle

\begin{abstract}
The pantograph differential equation and its solution, the deformed exponential function, are remarkable objects that appear in areas as diverse as combinatorics, number theory, statistical mechanics, and electrical engineering. In this article we describe a new surprising application of these objects in graph theory, by showing that the set of all cliques is not forcing for quasirandomness. This provides a natural example of an infinite family of graphs, which is not forcing, and answers a natural question posed by P.~Horn.
%
\end{abstract}

\section{The pantograph equation and its solution.}

The best known differential equation 
$$y'(x)=py(x), \ y(0)=1
$$
is solved by the exponential function $y=e^{px}$. Consider a visually similar but very different equation 
$$y'(x)=y(px), \ y(0)=1,
$$
where $0<p\leq 1$.
This is a special case of the \emph{pantograph} equation, one of the most studied examples of \emph{delay differential equations}, where the value of the derivative of $y$ at time $x$ is a function of the value of $y$ at an earlier time $px$. The pantograph equation owes its name to a component of the electric locomotive, connecting it to the overhead wire, and was first studied in the late 1960's by British Railways. The equation results from the study of the pantograph's movement, where it is crucial that the pantograph stays in constant contact with the wire, in order to collect current for the locomotive without interruptions; see~\cite{Gr} for more historical and physics background.

The pantograph equation and its generalizations have found many applications in physics~(see \cite{Liu}) and mathematics. It can be verified that its unique solution is the so-called \emph{deformed exponential function} 
\begin{equation}\label{eq:defexp}
f_p(x)=\sum_{j=0}^\infty \frac{x^j}{j!}p^{\binom{j}{2}}=1+x+\frac{x^2}{2}p+\frac{x^3}{6}p^3+\cdots;
\end{equation}
note that when $p=1$ we have $f_1(x)=e^x$, hence the name. In fact, the deformed exponential was first studied by Mahler in 1940~\cite{Ma}, nearly 30 years before the pantograph equation made an appearance. Mahler's motivation was in number theory: he used the function $f_p$ to derive an asymptotic formula for the number of partitions of a large integer $n$ into powers of a fixed integer $r$. Later the deformed exponential was found to appear naturally in many other contexts. In combinatorics it is connected to the Tutte polynomial of complete graphs~\cite{Tutte}, the enumeration of acyclic digraphs~\cite{Ro}, and inversions of trees~\cite{MaRi}. In statistical mechanics the function $f_p$ appears as the partition function of one-site lattice gas~\cite{ScSo}, and in complex analysis it is related to the Whittaker and Goncharov constants~\cite{Bo}. 
The function $f_p$ continues to be the focus of current research; see for example the recent paper~\cite{WZ} which studies the asymptotics of its roots.

In this article, we present an unexpected application of certain properties of the deformed exponential function to the theory of quasirandom graphs.

\section{Quasirandom graphs and forcing families.}

\emph{Quasirandomness}, or \emph{pseudorandomness}, is a phenomenon occurring in several areas of discrete mathematics: number theory, group theory, combinatorics, and graph theory. It can be loosely described as the study of properties of truly random objects
in deterministic ones; see~\cite{Tao} for a general survey, and \cite{KrSu} for a survey on pseudorandom graphs.

Let us first focus on counting copies of a fixed small graph $H$ inside a large graph $G$. It will be more convenient to count \emph{labeled} copies, that is, injective mappings from the vertex set of $H$ to that of $G$ that map edges to edges. Let us illustrate this with some examples.

\begin{example}\label{ex:stars}
If $G$ is a complete bipartite graph\footnote{A complete bipartite graph is a graph on the vertex set $A \cup B$, where $A$ and $B$ are disjoint, and whose edge set is $A \times B$. More generally, a complete $k$-partite graph is a graph whose vertex set is composed of $k$ disjoint sets $A_1,\ldots,A_k$ and whose edge set is $\bigcup_{i < j}A_i \times A_j$. In other words, every two vertices in distinct $A_i, A_j$ are connected by an edge.
}  with both parts of size $n/2$, where $n$ is a large even number, and $H$ is a star with $k$ edges, i.e., a complete bipartite 
graph with part sizes $1$ and $k$, then $G$ contains $$n\cdot \frac{n}{2}\left(\frac{n}{2}-1\right)\cdots\left(\frac{n}{2}-k+1\right)=(1+o(1))2^{-k}n^{k+1}$$ labeled copies of $H$.	
\end{example}	

In the above example, as well as throughout the rest of the article, any $o(1)$ expression should be read as a function tending to $0$ as $n$ goes to
$\infty$. The $o()$-notation is an equivalent, yet more convenient-to-use form of the usual $\epsilon$-$\delta$-$n_0$ formalism. In particular, we can use multiple $o(1)$-expressions in the same formula, saving us the need to introduce multiple $\epsilon$'s (two $o(1)$-expressions are not assumed to be identical).
Note that here and later the $o()$-notation assumes that $n$ tends to infinity and treats all other variables (e.g., $k$ in Example~\ref{ex:stars}) as constant parameters.


\begin{example}\label{ex:cycles}
If $G$ is a clique\footnote{A clique or complete graph is a graph on a vertex set $V$, whose edge set consists of all pairs $\{u,v\}\subseteq V$.} on $pn$ vertices, where $n$ is large and $0<p\leq 1$, and $H$ is a cycle of length $k$, then $G$ has 
$$pn(pn-1)\cdots (pn-k+1)=(1+o(1))p^kn^k
$$  
labeled copies of $H$.
\end{example}

For $0<p<1$ and a large integer $n$, the \emph{$p$-random graph} on $n$ vertices~\cite{FK}, also known as the binomial random graph and denoted by $G(n,p)$, is obtained by taking $n$ labeled vertices and including every edge between them randomly and independently with probability $p$. Then, a standard probabilistic argument using Chebyshev's inequality (also known as the second moment method~\cite{FK}) implies the following.

\begin{example}\label{ex:random}
For any fixed graph $H=(V,E)$, the random graph $G(n,p)$ contains with high probability (i.e., with probability tending to $1$ as $n$ grows) $$(1+o(1))p^{|E|}n^{|V|}$$ labeled copies of $H$. In particular, $G(n,p)$ with high probability contains
$(1+o(1))pn^2$ labeled edges and $(1+o(1))p^4n^4$ labeled copies of the $4$-cycle $C_4$. 
\end{example}

The reason we singled out the edges and $C_4$’s in the last example is the following seminal
result of Chung, Graham, and Wilson~\cite[Theorem 1]{CGW}. In what follows, when speaking about ``large graphs," we mean,
formally, sequences of graphs with the number of vertices $n$ tending to $\infty$.

\begin{theorem}[\cite{CGW}]\label{thm:CGW}
	Suppose that $0<p<1$. The following properties of (large) $n$-vertex graphs $G$ are equivalent. 
	\begin{enumerate}
		\item[(P1)] $G$ has $(1+o(1))pn^2$ labeled edges and 
		$$(1+o(1))p^4n^4$$
		labeled copies of $C_4$.
		\item[(P2)] For every fixed graph $H=(V,E)$, the number of labeled copies of $H$ in $G$ is  
		$$(1+o(1))p^{|E|}n^{|V|}.$$
		\item[(P3)] For every $c>0$ and vertex set $S\subseteq V(G)$ of size $|S|\geq cn$, the number of labeled edges between vertices in $S$ is $$(1+o(1))p|S|^2.$$
	\end{enumerate}
\end{theorem}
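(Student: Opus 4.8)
The plan is to establish the cyclic chain of implications (P2) $\Rightarrow$ (P1) $\Rightarrow$ (P3) $\Rightarrow$ (P2). The first of these is immediate: applying (P2) to the single edge $H=K_2$ gives the edge count in (P1), and applying it to $H=C_4$ gives the $C_4$ count. All the content lies in the other two implications, and I expect (P3) $\Rightarrow$ (P2) to be the main obstacle.

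For (P1) $\Rightarrow$ (P3) I would argue by two successive uses of the Cauchy--Schwarz inequality. Write $d(u)$ for the degree of a vertex $u$ and $d(u,v)=|N(u)\cap N(v)|$ for the number of common neighbours of $u$ and $v$ (with the convention $d(u,u):=d(u)$). A direct count shows that the number of labeled copies of $C_4$ equals $\sum_{u,v}d(u,v)^2-2\sum_w d(w)^2+\sum_w d(w)$, and since $\sum_w d(w)^2\le n\sum_w d(w)=(1+o(1))pn^3$, property (P1) therefore gives $\sum_{u,v}d(u,v)^2=(1+o(1))p^4n^4$. On the other hand, the edge count in (P1) together with Cauchy--Schwarz gives $\sum_w d(w)^2\ge\big(\sum_w d(w)\big)^2/n=(1+o(1))p^2n^3$ and $\sum_{u,v}d(u,v)^2\ge\big(\sum_w d(w)^2\big)^2/n^2$. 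Chasing these (in)equalities shows that they are all tight up to lower-order terms, which forces the two concentration estimates
$$\sum_w(d(w)-pn)^2=o(n^3)\qquad\text{and}\qquad\sum_{u,v}(d(u,v)-p^2n)^2=o(n^4).$$
Now fix $S\subseteq V(G)$ with $|S|\ge cn$. Expanding the non-negative quantity $\sum_{w\in V(G)}\big(\sum_{u\in S}(\mathbbm{1}[uw\in E]-p)\big)^2$ yields $\sum_{u,v\in S}\big(d(u,v)-p\,d(u)-p\,d(v)+p^2n\big)$, which by the two estimates above and another application of Cauchy--Schwarz is $o(n^3)$; restricting the outer sum to $w\in S$ and using Cauchy--Schwarz once more then gives $\bigl|2e(S)-p|S|^2\bigr|=o(n^2)$. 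As $|S|\ge cn$, this is precisely the statement in (P3) that $S$ spans $(1+o(1))p|S|^2$ labeled edges.

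For (P3) $\Rightarrow$ (P2) I would first upgrade (P3) to a bipartite discrepancy statement: for disjoint linear-sized sets $S,T$, the identity $e(S\cup T)=e(S)+e(T)+e(S,T)$ combined with (P3) gives $\bigl|\,|E(G)\cap(S\times T)|-p|S||T|\,\bigr|=o(n^2)$. Using this, I would count the labeled copies of a fixed graph $H$ with edge set $\{e_1,\dots,e_m\}$ by a telescoping (``hybrid'') argument: one interpolates between the true count and the target value $p^m n^{|V(H)|}$ through $m$ intermediate sums, in the $i$-th of which the edges $e_1,\dots,e_i$ are read off $G$ while $e_{i+1},\dots,e_m$ are replaced by the constant weight $p$. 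The difference between consecutive hybrids is a discrepancy-type sum attached to the edge being switched, and after pulling out its two endpoints and applying Cauchy--Schwarz once for each of the remaining $|V(H)|-2$ vertices of $H$, it is bounded by the bipartite discrepancy quantity above; since $H$ is fixed there are only $O(1)$ such steps, so the errors, though they accumulate, still add up to $o(n^{|V(H)|})$.

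The delicate part is this last implication. Unlike the clean algebraic identities behind (P1) $\Rightarrow$ (P3), the hybrid argument needs care: one must ensure that at each stage the vertex sets and common neighbourhoods involved have size linear in $n$ (which they do, since $H$ has a bounded number of vertices), so that (P3) --- a statement about linear-sized sets only --- may legitimately be applied, and one must check that the iterated Cauchy--Schwarz steps really reduce everything to the bipartite discrepancy bound rather than to some uncontrolled quantity. Should this bookkeeping prove cumbersome, an alternative is to route the argument through the spectral condition ``$\lambda_1(G)=(1+o(1))pn$ and $\lambda_2(G)=o(n)$'', where $\lambda_1\ge\lambda_2\ge\cdots$ denote the eigenvalues of the adjacency matrix $A$ of $G$: property (P1) yields this condition from the trace identities $\sum_i\lambda_i^2=2e(G)$ and $\sum_i\lambda_i^4=\mathrm{tr}(A^4)$ together with a convexity argument, the condition implies (P3) by the expander mixing lemma, and it implies (P2) by a perturbation/counting argument, once one notes that a near-extremal top eigenvector of $A$ must be almost uniform.
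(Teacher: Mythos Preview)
The paper does not prove this theorem: it is quoted from Chung, Graham, and Wilson~\cite{CGW} and used as a black box, so there is no in-paper argument to compare your proposal against.

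That said, your outline is essentially the classical route. The steps (P2)$\Rightarrow$(P1) and (P1)$\Rightarrow$(P3) are correct as written (your exact inclusion--exclusion formula relating labeled $C_4$'s to $\sum_{u,v}d(u,v)^2$ is slightly off, but the two quantities differ by $O(n^3)$, which is all you need). For (P3)$\Rightarrow$(P2) your telescoping/hybrid idea is the right one, but the parenthetical justification ``which they do, since $H$ has a bounded number of vertices'' does not stand on its own: (P3) gives no direct control over individual neighbourhoods $N(v)$ or common neighbourhoods $N(u)\cap N(v)$, only over edge counts inside prescribed linear-sized sets, and the fact that $H$ is small does nothing to change that. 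The standard fix is to first deduce from (P3), by an averaging/discrepancy argument, that all but $o(n)$ vertices have degree $(1+o(1))pn$ and all but $o(n^2)$ pairs have codegree $(1+o(1))p^2n$, and then run the count restricted to such ``typical'' vertex tuples. Your proposed spectral detour through $\lambda_2(G)=o(n)$ also works and is in fact closer to how Chung, Graham, and Wilson themselves organise the cycle of equivalences.
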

Observe that, by Example~\ref{ex:random}, a random graph $G(n,p)$ satisfies with high probability property (P2) and thus, a fortiori, also (P1).
Similarly, a standard application of the Chernoff  bound~\cite{FK} shows that $G(n,p)$ satisfies with high probability property (P3).
The remarkable aspect of Theorem~\ref{thm:CGW} is that every (deterministic) graph that satisfies the seemingly very weak property (P1) must
also satisfy the much stronger properties (P2) and (P3). In fact, the main result of \cite{CGW} exhibited a number of further equivalent 
conditions (which we do not state here formally, for brevity).

A large graph satisfying either (P1), (P2), or (P3) (and therefore all three of them), is called \emph{$p$-quasirandom}. A graph is \emph{quasirandom} if it is $p$-quasirandom for some $0<p<1$. The notion of quasirandomness is central to extremal combinatorics: for instance, Szemer\'edi's famous regularity lemma~\cite{Sz}  states (vaguely speaking) that the vertices of a large graph can be partitioned into a bounded number of parts, so that ``almost all" bipartite graphs between those parts are quasirandom, i.e., resemble a truly random subgraph of a complete bipartite graph.

A very sensible question to ask at this point is, whether any deterministically constructed quasirandom graphs are known to exist. The answer is yes, and one prominent class of examples are the \emph{Paley graphs} (arising from a similar construction for matrices in~\cite{Pa}). Their quasirandomness can be deduced from properties of quadratic residues.
\begin{example}\label{ex:paley}
Let $n=4k+1$ be a prime, so that $x$ is a quadratic residue modulo $n$ if and only if $-x$ is one. Let $G$ be a graph on the vertex set $\{0,\dots,n-1\}$, where $xy$ is an edge whenever $x-y$ is a quadratic residue modulo $n$. Then $G$ is $1/2$-quasirandom.
\end{example}
On the cautious side we would like to add that some properties of the truly random graph $G(n,p)$ are not captured by quasirandomness. For example, the largest clique in $G(n,p)$ is with high probability of order $\log n$, whereas in quasirandom graphs it can be of ``almost linear" size.  
 
The discussion above led the authors of~\cite{CGW} to define the notion of a \emph{forcing} graph family.

\begin{definition}
	A family of graphs $\F$ is \emph{forcing} if the following holds for every $0<p<1$. Suppose $G$ is an $n$-vertex graph so that for every $F\in \F$ the graph $G$ contains $(1+o(1))n^{|V(F)|}p^{|E(F)|}$ labeled copies of $F$. Then $G$ is $p$-quasirandom. 
\end{definition} 

By this definition, Theorem~\ref{thm:CGW} states that the pair $\{K_2,C_4\}$ is forcing. To obtain a better understanding of the quasirandomness phenomenon, one should look for a classification of forcing families. This natural inverse question to Theorem~\ref{thm:CGW} was raised by Chung, Graham, and Wilson in the same paper~\cite{CGW}.

In the subsequent years, this topic has seen a significant amount of research (see~\cite{HPS} and the references therein), resulting in discoveries of a number of further forcing families. It is well known that for any nonbipartite graph $H$ the pair $\{K_2,H\}$ is not forcing, and the main open question in this area is the \emph{forcing conjecture} by Skokan and Thoma~\cite{SkTh}, saying that for every connected bipartite graph $H$ that is not a tree the pair $\{K_2,H\}$ is forcing. In~\cite{SkTh} this was proved for every complete bipartite graph $H$ (more generally, it was shown in~\cite{SkTh} that $\{H_1,H_2\}$ is forcing for any pair of distinct complete bipartite graphs), but the full conjecture is still wide open.

In the light of the above, it is even more challenging to decide whether an \emph{infinite} family of graphs is forcing. Consider, for instance, four of the most natural such families, namely the sets of all cycles, stars, trees, and cliques. It is easy to see that the family of all cycles is not forcing for all $0<p<1$, since, by Example~\ref{ex:cycles}, the graph comprising a clique on $pn$ vertices and $(1-p)n$ isolated vertices has the ``correct" number $(1+o(1))p^\ell n^\ell$ of labeled cycles of length $\ell$, but fails to satisfy (P3) of Theorem~\ref{thm:CGW}. Similarly, the family of all stars is not forcing by Example~\ref{ex:stars} --- in fact, this example shows that the family of all trees is not forcing.

The situation is less clear for the set of all finite cliques $\K=\{K_2,K_3,\dots\}$, where $K_j$ denotes the complete graph on $j$ vertices. Horn~\cite{West} asked
whether $\K$, or perhaps some finite subset of $\K$, is forcing (the latter would clearly imply the former) --- this would mean that any large graph having $(1+o(1))p^{\binom{j}{2}}n^j$ labeled copies of $K_j$ for each $j\geq 2$ satisfies property (P3) of Theorem~\ref{thm:CGW}. Both questions have been open until now, and our aim in this article is to answer them in the negative. 

First we give an elementary proof of the fact that any set of finitely many cliques is not forcing.  

\begin{theorem}\label{thm:main}
For any $k\geq 2$ and $0<p\leq 1/4$ there exist arbitrarily large $n$-vertex graphs $G_{k,p}(n)$ with $(1+o(1))p^{\binom{j}{2}}n^j$ labeled copies of $K_j$ for all $j=2,\dots,k$, and an independent set of size at least $n/2$. Therefore, the family $\K_k=\{K_2,\dots, K_k\}$ is not forcing.

\end{theorem}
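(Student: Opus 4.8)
The plan is to take $G_{k,p}(n)$ to be a \emph{complete multipartite graph} (possibly together with a bounded number of isolated vertices) with carefully chosen part proportions. The point of using complete multipartite graphs is that in such a graph with part sizes $s_1,\dots,s_m$ every copy of $K_j$ must pick its $j$ vertices from $j$ distinct parts, so its number of labeled copies of $K_j$ is exactly
$$\sum_{\sigma\colon[j]\hookrightarrow[m]}\ \prod_{i=1}^{j}s_{\sigma(i)}\ =\ j!\cdot e_j(s_1,\dots,s_m),$$
where $e_j$ denotes the $j$-th elementary symmetric polynomial. Hence, if we set $s_l=\lfloor\beta_l n\rfloor$ for fixed reals $\beta_l\ge 0$, the graph has $(1+o(1))\,j!\,e_j(\beta_1,\dots,\beta_m)\,n^j$ labeled copies of $K_j$, while its largest part is an independent set of size $(1+o(1))(\max_l\beta_l)n$. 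Since, by property (P3) of Theorem~\ref{thm:CGW}, no $p$-quasirandom graph can contain an independent set of size $n/2$, the whole statement reduces to the following algebraic claim: \emph{for every $k\ge 2$ and $0<p\le 1/4$ there are positive reals $\beta_1,\dots,\beta_k$ with $\max_l\beta_l>1/2$ and $j!\,e_j(\beta_1,\dots,\beta_k)=p^{\binom{j}{2}}$ for all $j=1,\dots,k$} (the case $j=1$ then forces $\sum_l\beta_l=1$, and the case $j=0$ holds trivially).

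To establish this claim, I would produce the $\beta_l$ as negated reciprocals of the zeros of the degree-$k$ truncation of the deformed exponential $f_p$ from~\eqref{eq:defexp}, namely
$$R(x)=\sum_{j=0}^{k}\frac{p^{\binom{j}{2}}}{j!}\,x^j .$$
Indeed, since $R(0)=1$, if $R$ has only real (necessarily negative) zeros then it factors as $R(x)=\prod_{l=1}^{k}(1+\beta_l x)$ with all $\beta_l>0$, and comparing coefficients gives precisely $j!\,e_j(\beta_1,\dots,\beta_k)=p^{\binom{j}{2}}$ for $0\le j\le k$. This is where the hypothesis $p\le 1/4$ enters: writing $c_j=p^{\binom{j}{2}}/j!$ for the coefficients of $R$, a direct computation gives
$$\frac{c_j^2}{c_{j-1}c_{j+1}}=\frac{j+1}{j\,p}\ \ge\ \frac1p\ \ge\ 4 ,$$
so by the classical criterion of Hutchinson (a polynomial with positive coefficients $c_j$ satisfying $c_j^2\ge 4\,c_{j-1}c_{j+1}$ for all $j$ has only real negative zeros) the desired factorization exists. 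Finally, to see that some $\beta_l$ exceeds $1/2$ it suffices to find a zero of $R$ in $(-2,0)$: we have $R(0)=1>0$, whereas a short estimate gives $R(-2)=-1+2p+\sum_{3\le j\le k}\frac{(-2)^j}{j!}p^{\binom{j}{2}}<0$ for every $0<p\le 1/4$, because the tail sum (when present) has absolute value at most $p^3\sum_{j\ge 3}2^j/j!<1/16$. If $z\in(-2,0)$ is such a zero, then the corresponding $\beta_l=-1/z$ satisfies $\beta_l>1/2$.

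What remains is routine bookkeeping: checking the formula for the number of labeled $K_j$'s in a complete multipartite graph, verifying that replacing the exact sizes $\beta_l n$ by $\lfloor\beta_l n\rfloor$ changes $e_j$ only by $o(n^j)$, and observing that the (at most $k$) vertices left over after rounding can be added as isolated vertices, harming neither the clique counts nor the independent set. The step I expect to carry the real weight is the factorization of $R$, i.e.\ extracting from $p\le 1/4$ the positivity of all the $\beta_l$; once this is in hand everything else is elementary, and the threshold $1/4$ is seen to be nothing but the constant $4$ in Hutchinson's criterion. (Alternatively, one could apply Hutchinson's theorem directly to the entire function $f_p$, conclude that for $0<p\le 1/4$ it has only real negative zeros and hence factors as an infinite product $\prod_l(1+\beta_l x)$ with $\sum_l\beta_l=1$, and then use the finitely many largest $\beta_l$ together with a negligible set of isolated vertices; this introduces an extra $o(1)$-error into the clique counts but avoids truncating $f_p$.)
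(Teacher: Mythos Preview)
Your proposal is correct and essentially identical to the paper's proof: both take $G_{k,p}(n)$ to be the complete $k$-partite graph whose part proportions are the negated reciprocals of the roots of the truncated deformed exponential $R(x)=\sum_{j=0}^{k}\frac{p^{\binom{j}{2}}}{j!}x^{j}$, with real-rootedness for $p\le 1/4$ supplied by the same coefficient criterion (the paper cites Kurtz, you cite Hutchinson). The only cosmetic difference is in bounding the largest part---the paper argues $\max_l\beta_l\ge\sum_l\beta_l^{2}=1-p\ge 3/4$, whereas you locate a root in $(-2,0)$ by checking $R(-2)<0$ directly.
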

The theorem above deals only with finitely many cliques and only with $p\leq 1/4$.\footnote{Strictly speaking, in order to prove that a family $\F$ is not forcing it is enough to exhibit a counterexample for just one $0<p<1$. Still, it is more desirable to give examples for all $0<p<1$.} By applying some properties of the deformed exponential function defined in~\eqref{eq:defexp}, we prove our main result, extending Theorem~\ref{thm:main} to all cliques and to all values of $p$.
\begin{theorem}\label{thm:allp}
The infinite family $\K=\{K_2,K_3,\dots\}$ is not forcing for any $0<p<1$.
\end{theorem}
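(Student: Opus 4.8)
The plan is, for each fixed $0<p<1$, to exhibit a sequence of $n$-vertex graphs $G_n$ that have $(1+o(1))p^{\binom{j}{2}}n^j$ labeled copies of $K_j$ for every $j\ge 2$, yet contain an independent set of linear size; such a $G_n$ violates property~(P3) of Theorem~\ref{thm:CGW}, hence is not $p$-quasirandom, witnessing that $\K$ is not forcing. The graph $G_n$ will be an (inhomogeneous-random) blow-up of the following fixed weighted structure $W$. Fix $\alpha$ with $0<\alpha\le 1-p$. Split the $n$ vertices into an independent set $I$ of size $\alpha n$ and a set $P$ of size $(1-\alpha)n$ carrying a $p$-quasirandom graph. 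Index the vertices of $I$ by a coordinate $s\in[0,\alpha]$, and join the vertex with coordinate $s$ to a quasirandom $\beta(s)$-fraction of $P$, where
\[
\beta(s)=\frac{p\,(1-\alpha+s)}{1-\alpha}.
\]
Since $\beta(s)\in\big[\,p,\ \tfrac{p}{1-\alpha}\,\big]\subseteq[0,1]$ — and it is exactly the bound $\alpha\le 1-p$ that forces $\beta(\alpha)=\tfrac{p}{1-\alpha}\le 1$ — this is a legitimate family of edge densities, available for every $0<p<1$.

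The crux is to verify that $W$ has the clique counts of the constant-$p$ model. Encode these by $\Phi_W(x)=\sum_{j\ge 0}t_j\,\tfrac{x^j}{j!}$, where $t_j=\lim_n n^{-j}\#\{\text{labeled }K_j\text{ in }G_n\}$; by~\eqref{eq:defexp} we have $\sum_{j\ge 0}p^{\binom j2}x^j/j!=f_p(x)$, so what we want is precisely $\Phi_W=f_p$. Because $I$ is independent, every clique meets $I$ in at most one vertex. Cliques disjoint from $I$ live in $P$ and contribute $f_p((1-\alpha)x)$ to $\Phi_W$. A clique containing the $I$-vertex with coordinate $s$ is that vertex together with a $K_{j-1}$ in $P$, each of whose vertices is joined to it; summing over $s$ and $j$ and using $(1-\alpha)\beta(s)=p(1-\alpha+s)$, such cliques contribute $x\int_0^\alpha f_p\big(p(1-\alpha+s)x\big)\,ds$. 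Hence $\Phi_W=f_p$ amounts to the identity
\[
f_p(x)-f_p\big((1-\alpha)x\big)=x\int_0^\alpha f_p\big(p(1-\alpha+s)x\big)\,ds .
\]
This is exactly where the pantograph equation does the work: writing the left side as $\int_{(1-\alpha)x}^{x}f_p'(t)\,dt=\int_{(1-\alpha)x}^{x}f_p(pt)\,dt$ and substituting $t=(1-\alpha+s)x$ turns it into the right side. So the identity holds and the clique counts are as required.

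It remains to turn $W$ into genuine graphs: place a typical random graph $G((1-\alpha)n,p)$ on $P$ and join $I$ to $P$ by independent coin flips with the (discretized) probabilities $\beta(\cdot)$. A routine second-moment estimate shows that with high probability the resulting $G_n$ has $(1+o(1))p^{\binom j2}n^j$ labeled copies of $K_j$ for all $j$ up to some slowly growing function of $n$, while $I$ remains an independent set of size $\alpha n$. Any such $G_n$ meets the counting hypothesis for the whole family $\K=\{K_2,K_3,\dots\}$ but violates (P3) with $S=I$, so it is not $p$-quasirandom, proving that $\K$ is not forcing. I expect the main obstacle to be discovering the cross-density $\beta$: the computation above forces $\beta$ to be chosen so that the clique-count identity collapses onto the pantograph equation $f_p'=f_p(p\,\cdot\,)$, and making this fit together — including the boundary constraint $\alpha\le 1-p$ that keeps $\beta\le 1$ and thereby covers all $0<p<1$ — is the heart of the argument; the remaining steps are standard quasirandomness bookkeeping.
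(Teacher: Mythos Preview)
Your argument is correct and genuinely different from the paper's. Both proofs produce a graphon $W$ with $t(K_j,W)=p^{\binom{j}{2}}$ for all $j$ and a linear-size independent set, and then sample finite graphs from it; the difference is in how $W$ is built and which property of $f_p$ drives the clique-count identity. The paper takes $W$ to be a \emph{complete infinite-partite} (hence $\{0,1\}$-valued) graphon whose part sizes are $c_i=-1/a_i$, the negative reciprocals of the roots of $f_p$; the identity $\sigma_k(\mathcal C)=p^{\binom{k}{2}}/k!$ then falls out of the Hadamard product $f_p(z)=\prod_i(1+c_iz)$, for which the paper must invoke the nontrivial fact (cited to Iserles and to Morris--Feldstein--Bowen) that all roots of $f_p$ are real. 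Your $W$ is instead $[0,1]$-valued: an independent block of mass $\alpha\le 1-p$, a constant-$p$ block, and a linearly varying cross-density $\beta(s)$, and your clique identity reduces directly to the pantograph ODE $f_p'(x)=f_p(px)$, which is elementary. So your route bypasses the real-rootedness theorem and the entire-function machinery entirely, at the cost of a graphon that is not $\{0,1\}$-valued; since both proofs pass through a random sampling step anyway, this cost is nil for the purpose at hand. The remaining step---extracting a single sequence with the right $K_j$-counts for all $j$ simultaneously---is handled in the paper by an explicit diagonalisation over $k$ and $\epsilon_k\to0$, and your ``all $j$ up to a slowly growing function of $n$'' does the same job.
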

\noindent

\section{Proofs of the main results.}

\subsection{The finite case.}
First we deal with the case of finitely many cliques.
\begin{proof}[Proof of Theorem~\ref{thm:main}.]
We claim that for fixed $k\geq 2$ and $0<p\leq 1/4$ there exist $k$ real nonnegative numbers $(c_1,\dots,c_k)=(c_1(k,p),\dots,c_k(k,p))$ with the following properties.
\begin{enumerate}
\item[(i)] For all $1\leq j\leq k$, $$\sum_{A\in\binom{[k]}{j}}\prod_{i\in A}c_i=\frac{p^{\binom{j}{2}}}{j!},$$
where $\binom{[k]}{j}$ stands for the set of all $j$-element subsets of $\{1,\dots,k\}$.
\item[(ii)] $\max c_i\geq 3/4$.
\end{enumerate}	
Given these numbers, define $G=G_{k,p}(n)$ to be a graph on $n$ vertices as follows. Partition $V(G)$ into sets $V_1,\dots, V_k$, such that for all $1\leq i\leq k$ we have $||V_i|-c_in|< 1$; this is possible since, by (i), $\sum_{i=1}^k c_i=1$. Let $E(G)$ be the set of all edges $uv$ where $u\in V_i, v\in V_j$ for $i\neq j$. In other words, $G$ is the complete $k$-partite graph on $(V_1,\dots,V_k)$.

By this construction, for every $j=2,\dots,k$ the graph $G_{k,p}(n)$ has
\begin{align*}
j!\sum_{A\in \binom{[k]}{j}}\prod_{i\in A}|V_i| &=j!\sum_{A\in \binom{[k]}{j}}\prod_{i\in A}(c_i n\pm 1)=(1+o(1))j!\sum_{A\in \binom{[k]}{j}}\prod_{i\in A}c_i n^j\\
&=(1+o(1))p^{\binom{j}{2}}n^j
\end{align*}
labeled copies of $K_j$. Here $c_in\pm 1$ stands for an integer within $1$ of $c_i n$. On the other hand, by (ii), it has an independent set of size at least $n \cdot \max c_i -1\geq n/2$. Hence, this graph satisfies the assertions of Theorem~\ref{thm:main}. Note that the fact that $G_{k,p}(n)$ has an independent set of size at least $n/2$ implies it fails property (P3) of Theorem~\ref{thm:CGW} for $c=1/2$. Therefore, $G_{k,p}$ is not $p$-quasirandom for any $0<p<1$, and we conclude that $\K_k$ is not forcing.

It remains to construct the sequence $(c_1,\dots,c_k)$ with the properties above. To this end, consider the real polynomial function
\begin{equation}\label{eq:trunc} 
f_{p,k}(x)=\sum_{j=0}^k \frac{{p^{\binom{j}{2}}}}{j!}x^j,
\end{equation}
which is a truncated version of the deformed exponential function~\eqref{eq:defexp}.
 
Kurtz~\cite{K} established the following useful criterion for a polynomial to have only real roots, which can be viewed as a converse to Newton's inequalities.
	\begin{prop}[\cite{K}, Theorem 2]\label{prop:Kurtz}
		If the coefficients of a real polynomial $P(x)=\sum_{i=0}^n b_ix^i$ satisfy $b_i>0$ for all $i$ and $b_i^2>4b_{i-1}b_{i+1}$ for all $1\leq i\leq n-1$, then all the roots of $P$ are real and distinct.\footnote{Note that the constant $4$ is best possible for $n=2$}
	\end{prop}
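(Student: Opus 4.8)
The plan is to argue by induction on $n=\deg P$, the cases $n\le 2$ being immediate (for $n=2$ the hypothesis $b_1^2>4b_0b_2$ is exactly positivity of the discriminant). For the inductive step write $Q(x)=\sum_{i=0}^{n-1}b_ix^i$ for the truncation of $P$; its coefficients satisfy the same hypotheses, so by induction $Q$ has $n-1$ distinct real roots, necessarily negative since all $b_i>0$. Now deform the leading coefficient: consider $P_s(x)=Q(x)+sx^n$ for $s\in[0,b_n]$, so $P_0=Q$ and $P_{b_n}=P$. For small $s>0$ the polynomial $P_s$ has exactly $n$ distinct real roots -- the $n-1$ roots of $Q$ perturb slightly, and one new large negative root appears (escaping to $-\infty$ as $s\to0^+$). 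Since the roots vary continuously with $s$ and complex roots of a real polynomial occur in conjugate pairs, the count of real roots can only drop when two of them collide into a multiple real root. Hence it suffices to show that $P_s$ has no multiple real root for $s\in(0,b_n]$; this also yields distinctness at $s=b_n$.

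Next I would locate the potential multiple roots. If $P_s(\rho)=P_s'(\rho)=0$ for some (necessarily negative) $\rho$, then eliminating $s$ from these two equations gives $nP_s(\rho)=\rho P_s'(\rho)$; since the term $sx^n$ drops out of $nP_s-xP_s'$, this says $T(\rho)=0$ where $T(x):=nQ(x)-xQ'(x)=\sum_{i=0}^{n-1}(n-i)b_ix^i$, together with $s=-Q(\rho)/\rho^n$. The coefficients $(n-i)b_i$ of $T$ again satisfy Kurtz's hypotheses (because $(n-i)^2>(n-i-1)(n-i+1)$), so by induction $T$ has exactly $n-1$ distinct negative real roots $\rho_1,\dots,\rho_{n-1}$. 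Thus the only candidate parameters at which $P_s$ can have a multiple root are $s_k:=-Q(\rho_k)/\rho_k^n$, and it remains to prove that $s_k\notin(0,b_n]$ for every $k$.

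The crux is to put this into a workable form. A short computation gives $s_k=-g(w_k)$, where $w_k:=1/\rho_k$ and $g(w):=\sum_{j=1}^{n}b_{n-j}w^j$; moreover $w_1,\dots,w_{n-1}$ are precisely the critical points of $g$, since $\rho_k$ are the roots of $T$ while $\bigl(P(x)/x^n\bigr)'=-T(x)/x^{n+1}$ and $P(x)/x^n=b_n+g(1/x)$. Because $g(w)/w=\sum_{i=0}^{n-1}b_iw^{\,n-1-i}$ is itself of Kurtz type and of degree $n-1$, the induction hypothesis gives that $g$ has $n$ distinct real roots (namely $0$ and $n-1$ negative ones); hence the critical values of $g$ strictly alternate in sign, the local maxima being positive (so $s_k<0$ there, which is harmless) and the local minima negative. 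So the whole statement reduces to showing that \emph{every local minimum value of $g$ is strictly below $-b_n$}. For this I would set $\hat g(w):=g(w)+\tfrac{b_{n-1}^2}{4b_{n-2}}$, whose coefficient sequence $\bigl(\tfrac{b_{n-1}^2}{4b_{n-2}},\,b_{n-1},\,b_{n-2},\,b_{n-3},\dots\bigr)$ satisfies $b_{n-1}^2=4\cdot\tfrac{b_{n-1}^2}{4b_{n-2}}\cdot b_{n-2}$ and $b_{n-j}^2>4b_{n-j-1}b_{n-j+1}$ for $j\ge2$ -- i.e.\ all of Kurtz's inequalities hold, with equality only in the first. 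By the non-strict ($\ge$) version of the criterion (Hutchinson's classical theorem) $\hat g$ has only real roots, and, having positive leading coefficient, all of its local minima are $\le 0$; translating back, every local minimum of $g$ is $\le-\tfrac{b_{n-1}^2}{4b_{n-2}}$, which is $<-b_n$ precisely because $b_{n-1}^2>4b_{n-2}b_n$. This is exactly the point where the constant $4$ is consumed: completing the square produces a factor $\tfrac14$, and only the factor $4$ in the hypothesis outweighs it -- in agreement with the sharpness remark for $n=2$.

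The main obstacle is this last reduction, namely turning ``$P_s$ has a multiple root for some $s\in(0,b_n]$'' into the clean inequality about $g$ and then defeating it via $\hat g$; the remaining ingredients (real-rootedness of the truncation, of $T$, and of $g(w)/w$, and the continuity bookkeeping) are routine once the reduction is in place. One point that needs care is the appeal to the non-strict version of the criterion for $\hat g$: since $\hat g$ has the same degree $n$ as $P$, it cannot be quoted from the inductive hypothesis, so it must either be taken from Hutchinson's theorem or incorporated into the induction through a limiting argument from the strict case at lower degrees.
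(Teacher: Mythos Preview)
The paper does not prove this proposition at all; it simply quotes it from Kurtz's 1992 \emph{Monthly} note \cite{K} and uses it as a black box to factor $f_{p,k}$. So there is no ``paper's own proof'' to compare your argument against.

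As for your argument itself: the deformation $P_s=Q+sx^n$, the identification of candidate multiple roots via $T(x)=nQ(x)-xQ'(x)$, and the substitution $w=1/x$ turning the problem into a statement about the critical values of $g$ are all correct and elegant. The genuine gap is exactly the one you flag in your last paragraph, and it is not as easily patched as you suggest. You need the \emph{non-strict} version of the criterion for the polynomial $\hat g$, which has degree $n$, while you are in the middle of establishing the \emph{strict} version in degree $n$ by induction. Your proposed escape, ``a limiting argument from the strict case at lower degrees,'' does not apply: $\hat g$ has the same degree $n$ as $P$, and perturbing its constant term to make the first inequality strict still leaves a degree-$n$ polynomial for which you would need precisely the statement under proof. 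That forces you to import Hutchinson's theorem as an external input --- but Hutchinson's theorem is at least as strong as (indeed it implies) Kurtz's criterion, so the argument as written is circular in content if not in form.

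For contrast, Kurtz's own proof avoids any such dependency: he evaluates $P$ at the $n$ points $x_i=-b_{i-1}/b_i$ and shows by direct estimates on the terms that $(-1)^iP(x_i)>0$; together with $P(0)=b_0>0$ this gives $n$ sign changes on the negative axis and hence $n$ distinct negative real roots by the intermediate value theorem. If you want to salvage your approach, one route is to prove directly (without invoking real-rootedness of $\hat g$) that each local minimum value of $g$ lies below $-b_{n-1}^2/(4b_{n-2})$; this amounts to a pointwise inequality at each critical point and can be attacked with the same kind of term-by-term estimates Kurtz uses.
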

	\noindent
	In order to apply Proposition~\ref{prop:Kurtz} to the polynomial $f_{p,k}(x)$ all we need to check is that 
	\begin{equation*}
		\frac{p^{2\binom{j}{2}}}{j!j!}>\frac{4p^{\binom{j+1}{2}+\binom{j-1}{2}}}{(j+1)!(j-1)!}
	\end{equation*}
	holds for every $j=1,\dots,k-1$. This simplifies to 
	$$\frac{1}{j}>\frac{4p}{j+1},
	$$
	which is evidently true for all $p\leq 1/4$.
	Thus, $f_{p,k}$ has $k$ distinct real roots $a_1,\dots,a_k$, and can be written as 
	\begin{equation}\label{eq:factorize}
	f_{p,k}(x)=\frac{p^{\binom{k}{2}}}{k!}\prod_{i=1}^k(x-a_i).
	\end{equation}
	Since the coefficients of $f_{p,k}$ are positive, $f_{p,k}(x)>0$ for $x\geq 0$, and thus all of the roots are negative. Writing $c_i:=-1/a_i$ (so that $a_i=-1/c_i$), we obtain
	\begin{align}\label{eq:long}
	f_{p,k}(x)&=\frac{p^{\binom{k}{2}}}{k!}\prod_{i=1}^k\left(x+\frac{1}{c_i}\right)=\frac{p^{\binom{k}{2}}}{k!}\left(\prod _{i=1}^k c_i\right)^{-1}\prod_{i=1}^k(1+c_ix)\nonumber \\
	&=\frac{p^{\binom{k}{2}}}{k!}(-1)^k\prod _{i=1}^k a_i\prod_{i=1}^k(1+c_ix).
	\end{align}
	We will now show that the above-defined $c_1,\dots,c_k$ satisfy the properties (i) and (ii) stated at the beginning of the proof.
	Evaluating the constant term in~\eqref{eq:factorize} gives 
	$$\frac{p^{\binom{k}{2}}}{k!}(-1)^k\prod _{i=1}^k a_i=1,
	$$
	which, combined with~\eqref{eq:long}, implies
	\begin{equation}\label{eq:product}
f_{p,k}(x)=\prod_{i=1}^k(1+c_ix).
	\end{equation}
Next, evaluating in~\eqref{eq:product} the coefficient of $x^j$ for all $1\leq j\leq k$ gives
\begin{equation}\label{eq:jcliques}
\sum_{A\in \binom{[k]}{j}}\prod_{i\in A}c_i =\frac{p^{\binom{j}{2}}}{j!},
\end{equation}
establishing property (i). In particular,~\eqref{eq:jcliques} implies $\sum_{i=1}^kc_i=1$ and
$\sum_{1\leq i< j\leq k} c_ic_j=p/2$. Therefore,
	\begin{align*}
		\max c_i&=\max c_i\cdot \sum_{i=1}^k c_i \geq \sum_{i=1}^k c_i^2 
		=\left(\sum_{i=1}^k c_i\right)^2-2\sum_{1\leq i< j\leq k} c_ic_j=1-p
		\geq \frac{3}{4},
	\end{align*}
establishing property (ii).	
\end{proof}
It turns out that the polynomial $f_{p,k}$ has imaginary roots when $p > 1/2$; hence the approach we used in the proof above cannot cover all $0<p<1$. Therefore, to extend this to all $0<p<1$ and to handle the set of all cliques, we need a slightly different approach. 
\subsection{The general case.} We now prove that $\K$, the family of all finite cliques, is not forcing. As a first step towards the proof of Theorem~\ref{thm:allp}, we will construct an ``infinite graph" satisfying its assertion. We briefly mention that in the theory of graph limits (see \cite{Lo}) such an object is called a {\em graphon}. 

\begin{lemma}\label{lem:graphons}
For every $0<p<1$ there is a graph $W_p$ whose vertex set is the interval $[0,1]$ and which satisfies:
\begin{enumerate}
	\item[(1)] For every $k\geq 2$, if we randomly and (Lebesgue-)uniformly select $k$ vertices, $v_1,\dots,v_{k}$ from $[0,1]$, 
	then the probability that for every $i<j$ the vertices $v_i,v_j$ are connected by an edge in $W_p$ is $p^{\binom{k}{2}}$.
	\item[(2)] There is an interval $I \subseteq [0,1]$ of length $1-p$ so that $\{x,y\}$ is not an edge of $W_p \ $ for every $x,y \in I$. 
\end{enumerate}
\end{lemma}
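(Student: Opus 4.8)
\emph{Proof plan.} The idea is to realize $W_p$ as an infinite analogue of the complete multipartite graphs $G_{k,p}(n)$ from the proof of Theorem~\ref{thm:main}: a ``complete multipartite graphon'' whose part measures are dictated by the \emph{full} deformed exponential $f_p$ from~\eqref{eq:defexp} rather than by its truncation $f_{p,k}$ from~\eqref{eq:trunc}. The analytic fact about $f_p$ that powers the construction --- and the place where the pantograph equation enters --- is that for $0<p<1$ the entire function $f_p$ has only real zeros; since all its coefficients are positive these zeros are negative, and since $f_p$ has order zero (its Maclaurin coefficients decay super-exponentially) with $f_p(0)=1$, the Hadamard factorization theorem gives
\[
f_p(x)=\prod_{i=1}^{\infty}(1+c_i x),\qquad c_i>0,\ \textstyle\sum_i c_i<\infty,
\]
where $\{-1/c_i\}$ is the zero set of $f_p$. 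Comparing the coefficients of $x$ and of $x^2$ on the two sides yields $\sum_i c_i=1$ and $\sum_{i<j}c_ic_j=p/2$, whence $\sum_i c_i^2=1-p$. Since the $c_i$ are summable they tend to $0$, so after reordering we may assume $c_1\ge c_2\ge\cdots$.

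Given the $c_i$, I would partition $[0,1]$ into consecutive intervals $V_1,V_2,\dots$ with $|V_i|=c_i$, and let $W_p$ be the graph on $[0,1]$ in which $\{x,y\}$ is an edge exactly when $x$ and $y$ lie in distinct parts. To check property~(1): if $v_1,\dots,v_k$ are chosen independently and uniformly from $[0,1]$, then all pairs $v_iv_j$ are edges of $W_p$ precisely when $v_1,\dots,v_k$ land in $k$ pairwise distinct parts; summing over the admissible part-patterns, the probability of this event equals $k!\sum_{i_1<\cdots<i_k}\prod_{\ell=1}^{k}c_{i_\ell}$, i.e.\ $k!$ times the $k$-th elementary symmetric function of the $c_i$, which is $k!$ times the coefficient of $x^k$ in $\prod_i(1+c_ix)=f_p(x)$, namely $k!\cdot p^{\binom{k}{2}}/k!=p^{\binom{k}{2}}$. (For $k=2$ this recovers $1-\sum_i c_i^2=p$.) To check property~(2), the same one-line computation as in the proof of Theorem~\ref{thm:main} gives
\[
\max_i c_i=\Big(\max_i c_i\Big)\sum_j c_j\ \ge\ \sum_j c_j^2=1-p,
\]
so $c_1\ge 1-p$; taking $I$ to be any subinterval of $V_1$ of length $1-p$ then furnishes an interval containing no edge of $W_p$, as required.

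I expect the sole genuine difficulty to be the input invoked at the outset: the existence of the product expansion $f_p(x)=\prod_i(1+c_ix)$ with nonnegative coefficients, equivalently the statement that every zero of the deformed exponential function is real (and hence, by positivity of the coefficients, negative). This is exactly where the elementary approach behind Theorem~\ref{thm:main} runs out of steam --- the Kurtz criterion (Proposition~\ref{prop:Kurtz}) already fails for the truncations $f_{p,k}$ once $p>1/2$, and indeed those truncations do acquire complex roots --- so the proof must instead appeal to a non-elementary property of $f_p$ coming from the theory of the pantograph equation, namely membership of $f_p$ in the Laguerre--P\'olya class for $0<p<1$. Everything else in the lemma is the bookkeeping above.
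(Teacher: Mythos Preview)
Your proposal is correct and follows essentially the same route as the paper: build a complete infinite-partite graphon whose part sizes are the reciprocals of the (negative, real) zeros of $f_p$, invoke the Hadamard factorization of the order-zero entire function $f_p$ to identify the elementary symmetric functions of the $c_i$ with the Maclaurin coefficients $p^{\binom{k}{2}}/k!$, and bound $c_1\ge 1-p$ via $\max_i c_i\cdot\sum_j c_j\ge\sum_j c_j^2=\sigma_1^2-2\sigma_2$. The paper cites Iserles and Morris--Feldstein--Bowen for the real-rootedness input you flag as the one nontrivial ingredient.
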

Observe that assertion (1) is a continuous counterpart to the property of a finite $n$-vertex graph containing the ``correct" number of cliques. Similarly, assertion (2) states that $W_p$ has an independent set on a $(1-p)$-fraction of its vertices --- a finite graph with this property would fail to satisfy (P3) of Theorem~\ref{thm:CGW}.
\begin{proof}[Proof of Lemma~\ref{lem:graphons}]
For a sequence of positive real numbers $\C=(c_1,c_2,\dots)$ and an integer $k\geq 1$, we use  $\sigma_k(\C)$ to denote the formal expression $\sum_{A\in \binom{\mathbb{N}}{k}}\prod_{j\in A}c_j$. Similarly to the proof of Theorem~\ref{thm:main}, we claim that for every $0<p<1$ there exists a sequence $\C=(c_1,c_2,\dots)$ of positive reals $1> c_1\geq c_2\geq\dots >0$ with the following properties. 

\begin{enumerate}
\item[(i)] For each $k\geq 1$, $\sigma_k(\C)$ is convergent, with $\sigma_k(\C)=p^{\binom{k}{2}}/k!$. 
\item[(ii)] $\max c_i =c_1 \geq 1-p.$
\end{enumerate}

With such a sequence at hand, we partition the $[0,1]$-interval into infinitely many intervals $V_1,V_2,\dots$, such that $|V_i|=c_i$ for all $i$ (note that $\sum_{i=1}^{\infty}c_i=\sigma_1(\C)=1$), and we let $W_p$ be the graph on the vertex set $[0,1]$, where $x$ and $y$ are connected by an edge if and only if they belong to different intervals $V_i$ and $V_j$.

Then for every fixed $k\geq 2$, a random sample of $k$ vertices $v_1,\dots,v_k$ from $[0,1]$ forms a clique in $W_p$ with probability
$k!\sigma_k(\C)=p^{\binom{k}{2}}$. Moreover, for the interval $V_1$ we have $|V_1|=c_1\geq 1-p$, and no $x,y\in V_1$ are connected by an edge in $W_p$. Hence, $W_p$ satisfies both assertions of the lemma.

To construct the desired sequence $\C$, we consider the deformed exponential function $f_p(z)$ over a complex variable $z$, defined in~\eqref{eq:defexp}. It was shown in~\cite{Is} and~\cite{MFB} that for every $0<p<1$ the function 
 $f_p$ can be represented as a product: 
\begin{equation}\label{eq:hadamard}
f_p(z)=\prod_{i=1}^{\infty} \left(1-\frac{z}{a_i}\right),
\end{equation} 
where the $a_i$, the roots of $f_p$, are real (which means they must be negative, as the power series of $f_p$ has only positive signs).\footnote{To be more detailed, $f_p(z)$ is an \emph{entire function}, that is, $f_p$ is defined on all of $\mathbb{C}$, and is holomorphic everywhere (for more background information about entire functions see~\cite{Le, StS}). The \emph{order} of an entire function $g(z)=\sum_{j=0}^{\infty}a_jz^j$ is given by the formula $\rho =\limsup_{n\rightarrow \infty}\frac{n \log n}{\log (1/|a_n|)}$ (\cite[Section 1.3, Theorem 2]{Le}).
Thereby, the deformed exponential $f_p$ is of order $0$. By Hadamard's factorization theorem (\cite[Theorem 5.1]{StS}) an entire function of order $0$, taking value $1$ at $x=0$, can be represented as a product~\eqref{eq:hadamard}, where the $a_i$ are its complex roots. In~\cite{Is, MFB} it was shown that for any $0<p<1$ all roots of $f_p$ are real.}	
Thus, we can set $c_i=-1/a_i$ and $\C:=(c_1,c_2,\dots)$, where the elements are indexed in descending order, to obtain 
$$f_{p}(z)=\prod_{i=1}^{\infty}(1+c_iz).
$$ 
By comparing the coefficients of $z^k$ for every $k\geq 1$ we obtain that each $\sigma_k(\C)$ is convergent, with $\sigma_k(\C)=p^{\binom{k}{2}}/k!$.  
Furthermore, since all $c_i$ are positive, and $\sigma_1$ and $\sigma_2$ are (absolutely) convergent, for $c_1=\max \C$ we get
\begin{align*}
c_1&= c_1\cdot 1 = c_1 \cdot \sigma_1(\C) \geq \sum_{i=1}^\infty c_i^2
=(\sum_{i=1}^\infty c_i)^2-2\sum_{1\leq i<j}c_ic_j\\
&=\sigma_1(\C)^2-
2\sigma_2(\C)=1-p.
\end{align*}
\end{proof}
Lemma~\ref{lem:graphons} gives an example of a ``complete infinite-partite" graph that contains for every $j \geq 2$ the same fraction of
copies of $K_j$ as the random graph $G(n,p)$. Such a construction of course cannot be achieved for finite graphs. Instead, for every $k\geq 2$ we show how to turn $W_p$ into large graphs containing the correct number of labeled copies of $K_j$ for every $j \leq k$. 

\begin{proof}[Proof of Theorem~\ref{thm:allp}]
Take the infinite graph $W_p$ defined in Lemma~\ref{lem:graphons}, select $n$ vertices $v_1,\dots,v_n$ from it uniformly at random, and let $G_{p}(n)$ be the graph induced on these vertices. That is, the edges of $G$ are the edges of $W_p$ between $v_1,\dots, v_n$. 

Fix $k\geq 2$ and $\epsilon>0$. By assertion (1) of Lemma~\ref{lem:graphons} and the law of large numbers, for a sufficiently large $n= n(k,\epsilon)$ the graph $G_p(n)$ will, with probability greater than $1/2$, contain between $(1-\epsilon)n^jp^{\binom{j}{2}}$ and $(1+\epsilon)n^jp^{\binom{j}{2}}$ labeled copies of $K_j$ for all $j=2,\dots,k$. Additionally, assertion (2) of Lemma~\ref{lem:graphons} together with the law of large numbers imply that, for large $n$, with probability greater than $1/2$, $G_p(n)$ will have an independent set of size at least $(1-p)n/2$. 
Thus, with positive probability, there exists a graph $G_{k,p,\epsilon}(n)$ that has both properties. In other words, $G_{k,p,\epsilon}(n)$ is an 
$n$-vertex graph containing between $(1-\epsilon)n^jp^{\binom{j}{2}}$ and $(1+\epsilon)n^jp^{\binom{j}{2}}$ labeled $j$-cliques for $j=2,\dots,k$ and an independent set of size at least $(1-p)n/2$.  

Now, select a sequence $\epsilon_1>\epsilon_2>\dots>0$ such that $\lim_{k\rightarrow \infty} \epsilon_k=0$, for instance, $\epsilon_k=1/k$. For each $k$ take a graph $G_k=G_{k,p,\epsilon_k}(n)$ (for some $n$), and consider the sequence $G_2,G_3,\dots$; for the order $n=|V(G_k)|$ of the graphs we have $n\geq k$ (as $G_k$ contains a $k$-clique), so $n$ tends to $\infty$. Moreover, by construction, for every $j\geq 2$ the graphs in the sequence contain $(1+o(1))p^{\binom{j}{2}}n^j$ labeled copies of $K_j$, while also containing an independent set of size at least $(1-p)n/2$. In particular, $G=G_k$ fails property (P3) of Theorem~\ref{thm:CGW}, implying that $G$ is not $p$-quasirandom. Therefore, the set of all cliques $\K=\{K_2,K_3,\dots\}$ is not forcing for any $0<p<1$.
\end{proof}
\begin{remark}
The proof of Lemma~\ref{lem:graphons} relied on the fact that the roots of $f_p$ are all real. It is worth noting that a lot more is known about these numbers; for instance, the $k$th largest root (denoted $a_k$ above) is known to be of order $-kp^{1-k}$. Wang and Zhang~\cite{WZ} very recently established the asymptotics of the roots of $f_p$ up to arbitrary lower order terms.
\end{remark}

\section*{Acknowledgments.}
We thank two anonymous referees for their helpful remarks. The first author was supported in part by ISF Grant 1028/16, ERC Consolidator Grant 863438 and NSF-BSF Grant 20196.
The second author was supported in part by ERC Synergy grant DYNASNET 810115 and the H2020-MSCA-RISE project CoSP- GA No. 823748.

\end{document}